    \numberwithin{equation}{section}
\newtheorem{theorem}{Theorem}[section]
\newtheorem{example}[theorem]{Example}
\newtheorem{prop}[theorem]{Proposition}
\newtheorem{remark}[theorem]{Remark}
\begin{document}

\title[Symmetric functions and symmetrized bidisk]{Symmetric Schur-class functions on the bidisk and Schur-class functions on the symmetrized bidisk}
\author[R. Baran]{Radomi\l \ Baran}\address{Department of Mathematics\\ Jagiellonian University\\ 
Krak\'{o}w, Poland.
}\email{radomil.baran@doctoral.uj.edu.pl}

\author[H. J. Woerdeman]{Hugo J. Woerdeman}\address{Department of Mathematics\\ Drexel University\\ 
3141 Chestnut Street\\ Philadelphia, PA 19104, USA. } \email{hugo@math.drexel.edu} \thanks{The research of HJW is supported by National Science Foundation grant DMS 2348720.
}

\date{ }

\dedicatory{In Memory of Rien Kaashoek}

\begin{abstract}
We present some thoughts on the relation between  symmetric Schur-class functions on the bidisk and Schur-class functions on the symmetrized bidisk. Among other things, use of this relation leads to a finite dimensional realization result for rational matrix functions in the Schur-class on the symmetrized bidisk and also to a determinantal representation result for polynomials without zeros on the symmetrized bidisk. 
\end{abstract}

\keywords{Symmetric two variable functions, Symmetrized bidisk, Schur class, Nevanlinna-Pick interpolation, Determinantal representation}

\subjclass{
Primary: 47A48 
Secondary: 47A13,
30E05, 15A15, 32Q02.}

          \maketitle

\section{Introduction}

The \emph{symmetrized bidisk} is the domain
\[
\mathbb{G} = \left\{(s, p) \in \mathbb{C}^2 : \exists\, z, \zeta \in \mathbb{D},\ s = z + \zeta,\ p = z \zeta \right\},
\]
the image of the bidisk ${\mathbb D}^2 = \{ (z,\zeta) \in {\mathbb C}^2 : |z|<1, |\zeta|<1 \}$ under the symmetrization map $(z, \zeta) \mapsto (z + \zeta, z \zeta)$. Despite its non-convexity and lack of product structure, $\mathbb{G}$ supports a rich function theory. Of particular interest is the Schur class on $\mathbb{G}$, the set of analytic functions bounded by $1$, which has been extensively studied using operator-theoretic techniques.

A fundamental result of Agler and Young~\cite{MR3641771} shows that any Schur-class function on $\mathbb{G}$ admits a realization as a modified transfer function of a unitary operator colligation. By different methods Bhattacharyya and Sau \cite{MR3724147} also arrive at this realization result. These realizations reflect the geometry of $\mathbb{G}$ and generalize classical linear-fractional models from one variable. The realization theory is a useful tool in interpolation: the Nevanlinna-Pick problem on $\mathbb{G}$ admits a solution in terms of kernel positivity, similar in spirit to the classical theory, but requiring tools adapted to the symmetrized structure; see  \cite{MR3641771, MR3724147, MR4900535}.

These results are also connected to the study of $\Gamma$-contractions, pairs of commuting operators on Hilbert
space for which the symmetrized bidisk
is a spectral set, and have implications in control theory and multivariable operator theory. For further details, see~\cite{AglerYoung2004, BHATTACHARYYA2012577}.

In this paper we arrive at realizations for functions in the Schur class of the symmetrized bidisk ${\mathbb G}$ by first developing a realization result for symmetric functions (i.e., $f(z,\zeta)=f(\zeta,z)$) in the Schur class on the bidisk ${\mathbb D}^2$. Our approach has the advantage that we also establish a finite dimensional realization for rational Schur class functions on the symmetrized bidisk. In the case that the rational function is inner, the finite dimensional realization was established in \cite[Theorem 3.4]{MR4900535}. In addition, we explore Nevanlinna-Pick interpolation and determinantal representations both for symmetric functions on ${\mathbb D}^2$ (see Section 2) as well as functions on ${\mathbb G}$ (see Section 3).

\section{Symmetric functions}

We say that a function $f(z,\zeta)$ is {\em symmetric} if for all $(z,\zeta)$ in its domain, we have that $(\zeta, z)$ belongs to its domain and $f(z,\zeta)=f(\zeta,z)$. 

We denote Hilbert spaces by curly capitals, such as ${\mathcal H}, {\mathcal U}, {\mathcal Y}$, and we let ${\mathcal B}({\mathcal U}, {\mathcal Y})$ denote the Banach space of bounded linear operators acting ${\mathcal U} \to {\mathcal Y}$.
We let ${\mathcal S}_{{\mathbb D}^2}({\mathcal U}, {\mathcal Y})$ denote the Schur class on ${\mathbb D}^2$ of ${\mathcal B}({\mathcal U}, {\mathcal Y})$-valued functions; i.e., $f:{\mathbb D}^2 \to {\mathcal B}({\mathcal U}, {\mathcal Y})$ is analytic and its supremum norm $\| f \|_\infty$ is less than or equal to 1.

\begin{theorem}\label{thm1}
    We have that $f(z,\zeta)\in {\mathcal S}_{{\mathbb D}^2}({\mathcal U}, {\mathcal Y})$ is symmetric if and only if there exists a contractive colligation matrix
    \begin{equation}\label{coll} M=\begin{pmatrix} A_1 & A_2 & B\cr A_2 & A_1 & B \cr C & C & D \end{pmatrix} : \begin{matrix} {\mathcal H} \cr \oplus  \cr {\mathcal H} \cr \oplus\cr {\mathcal U}\end{matrix} \to \begin{matrix} {\mathcal H} \cr \oplus \cr  {\mathcal H}\cr \oplus \cr {\mathcal Y}\end{matrix}\end{equation} so that
    \begin{equation}\label{ff} f(z,\zeta)= D + \begin{pmatrix} C & C \end{pmatrix} \begin{pmatrix} zI & 0 \cr 0 & \zeta I \end{pmatrix} \left( I- \begin{pmatrix} A_1 & A_2 \cr A_2 & A_1 \end{pmatrix} \begin{pmatrix} zI & 0 \cr 0 & \zeta I \end{pmatrix}\right)^{-1} \begin{pmatrix} B \cr B \end{pmatrix}.\end{equation}
    Moreover, if $f$ is a rational matrix function then the colligation matrix can be chosen to act on finite dimensional spaces.
\end{theorem}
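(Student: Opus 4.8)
The plan is to treat the two implications separately and to read the repeated-block structure of (\ref{coll}) off a \emph{flip symmetry} of the underlying Agler decomposition. Throughout let $J=\begin{pmatrix}0&I\\I&0\end{pmatrix}$ denote the flip on $\mathcal{H}\oplus\mathcal{H}$, and write $Z(z,\zeta)=\begin{pmatrix} zI & 0\\ 0&\zeta I\end{pmatrix}$, $\mathbf{A}=\begin{pmatrix}A_1&A_2\\A_2&A_1\end{pmatrix}$, $\mathbf{B}=\begin{pmatrix}B\\B\end{pmatrix}$, $\mathbf{C}=\begin{pmatrix}C&C\end{pmatrix}$.

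For the ``if'' direction, assume $f$ is given by (\ref{ff}) with $M$ as in (\ref{coll}). First I would record the identities $J\mathbf{A}J=\mathbf{A}$, $J\mathbf{B}=\mathbf{B}$, $\mathbf{C}J=\mathbf{C}$, together with $Z(\zeta,z)=JZ(z,\zeta)J$ and $J^2=I$. Substituting into (\ref{ff}) and using $\bigl(I-\mathbf{A}JZ(z,\zeta)J\bigr)^{-1}=J\bigl(I-\mathbf{A}Z(z,\zeta)\bigr)^{-1}J$ collapses $f(\zeta,z)$ back to $f(z,\zeta)$, so $f$ is symmetric. This is a short direct computation and presents no difficulty.

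For the ``only if'' direction I would proceed via kernels. Since the Schur class coincides with the Schur--Agler class on the bidisk (Ando's inequality), $f$ admits an Agler decomposition
\[
I_{\mathcal{U}}-f(w,\eta)^*f(z,\zeta)=(1-\bar w z)K_1((z,\zeta),(w,\eta))+(1-\bar\eta\zeta)K_2((z,\zeta),(w,\eta))
\]
with positive semidefinite $\mathcal{B}(\mathcal{U})$-valued kernels $K_1,K_2$. Introducing the flip $\sigma$ on kernels by $(\sigma K)((z,\zeta),(w,\eta))=K((\zeta,z),(\eta,w))$ and applying the swap $(z,\zeta,w,\eta)\mapsto(\zeta,z,\eta,w)$ together with $f(z,\zeta)=f(\zeta,z)$ shows that $(\sigma K_2,\sigma K_1)$ is again an Agler pair for $f$. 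Averaging then produces $L_1=\tfrac12(K_1+\sigma K_2)$ and $L_2=\tfrac12(K_2+\sigma K_1)$, a genuine Agler pair satisfying $\sigma L_1=L_2$; that is, $f$ has a flip-symmetric Agler decomposition.

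Next I would run the lurking-isometry construction on $(L_1,L_2)$. Kolmogorov-factorizing and using $\sigma L_1=L_2$ lets me take the two factorization spaces equal, $\mathcal{H}_1=\mathcal{H}_2=:\mathcal{H}$, with the factorizing map of $L_2$ obtained from that of $L_1$ by the variable swap. The two pencils defining the lurking isometry are then intertwined by $J$ on $\mathcal{H}\oplus\mathcal{H}$ combined with $(z,\zeta)\mapsto(\zeta,z)$; since $f$ is swap-invariant, the isometry extends to a unitary colligation $M$ that commutes with the flip (acting as the identity on $\mathcal{U}$ and $\mathcal{Y}$). The three resulting relations $J\mathbf{A}J=\mathbf{A}$, $J\mathbf{B}=\mathbf{B}$, $\mathbf{C}J=\mathbf{C}$ are precisely the repeated blocks of (\ref{coll}), and $M$ is contractive (indeed unitary), giving (\ref{ff}). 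I expect the main obstacle to be exactly this flip-equivariance: one must perform the unitary completion of the lurking isometry $J$-equivariantly, extending over the orthogonal complements so as to commute with $J$, while handling the operator-valued Kolmogorov factorization and the relevant closures with care.

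Finally, for rational $f$ the same scheme applies verbatim once one starts from a finite-dimensional Agler decomposition, which is available for rational Schur-class functions on the bidisk (Kummert, Knese). Since $\sigma$ preserves finite rank and each $L_i$ is a finite sum of finite-rank kernels, the factorization space $\mathcal{H}$ remains finite-dimensional, and hence the colligation $M$ acts on finite-dimensional spaces.
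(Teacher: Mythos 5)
Your proposal is correct in outline, but it takes a genuinely different route from the paper. The paper does not touch Agler decompositions or the lurking isometry at all: it starts from an arbitrary (not necessarily symmetric) contractive transfer-function realization of $f$ with colligation $\hat M$, writes $f=\tfrac12(f(z,\zeta)+f(\zeta,z))$, and realizes this average on the doubled state space, arranging the blocks so that the colligation has exactly the repeated structure of \eqref{coll}; contractivity is then immediate from the explicit identity $M=V^*(\hat M\oplus\hat M)V$ with $V$ an isometry, and finite-dimensionality in the rational case is inherited directly from the cited finite-dimensional realization results. Your version pushes the same averaging idea one level down, to the Agler kernels, producing a flip-symmetric Agler pair $(L_1,L_2)$ with $\sigma L_1=L_2$ and then re-running the lurking isometry $J$-equivariantly. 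This works: the domain and range of the lurking isometry are $(J\oplus I)$-invariant and the isometry intertwines $J\oplus I$, so extending by zero on the orthogonal complement already gives a contraction commuting with $J\oplus I$, whose block form is forced to be \eqref{coll} (and one can even get a unitary colligation by extending equivariantly on each $\pm1$ eigenspace of $J$, enlarging ${\mathcal H}$ symmetrically). What your route buys is a more intrinsic construction and the option of a unitary, flip-commuting colligation; what it costs is precisely the equivariant-extension lemma you flag, plus reliance on finite-dimensional Agler decompositions for rational (not necessarily inner) bidisk Schur functions in the last step, which is the content of the reference the paper cites anyway. The paper's route is more elementary and, importantly for Section 3, directly converts any given realization of $f$ into a symmetric one by explicit formulas, which is what the algorithmic Remark \ref{remark} exploits. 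Two small points to tidy up in your write-up: in the ``if'' direction you should also note that \eqref{ff} with $M$ contractive places $f$ in ${\mathcal S}_{{\mathbb D}^2}({\mathcal U},{\mathcal Y})$ (the statement asserts both membership and symmetry), and you should say explicitly that the zero (or unitary) extension commutes with $J\oplus I$ because the closed span being extended over is $J$-invariant.
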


\begin{proof} The only if part follows immediately
  as $f$ given in \eqref{ff} belongs to ${\mathcal S}_{{\mathbb D}^2}({\mathcal U}, {\mathcal Y})$ and clearly $f(z,\zeta)=f(\zeta,z)$.

  For the if direction, since $f\in {\mathcal S}_{{\mathbb D}^2}({\mathcal U}, {\mathcal Y})$, by \cite{MR1207393},
  we may write 
  \begin{equation}\label{f2} f(z,\zeta)= D + \begin{pmatrix} C_1 & C_2 \end{pmatrix} \begin{pmatrix} zI & 0 \cr 0 & \zeta I \end{pmatrix}\left( I- \begin{pmatrix} A_{11} & A_{12} \cr A_{21} & A_{22} \end{pmatrix} \begin{pmatrix} zI & 0 \cr 0 & \zeta I \end{pmatrix}\right)^{-1} \begin{pmatrix} B_1 \cr B_2 \end{pmatrix}\end{equation} for some contractive colligation matrix 
  $$\hat{M}=\begin{pmatrix} A_{11} & A_{12} & B_1\cr A_{21} & A_{22}  & B_2 \cr C_1 & C_2 & D \end{pmatrix} .$$
  Observing now that $$f(z,\zeta) = \frac12 (f(z,\zeta) + f(\zeta,z)), $$ it is straightforward to see that $f$ can be written as in \eqref{ff} with
  $$ A_1= \begin{pmatrix} A_{11} & \hspace{-0.2cm} 0 \cr 0 & \hspace{-0.2cm} A_{22} \end{pmatrix} , A_2 = \begin{pmatrix} 0 & \hspace{-0.2cm} A_{12} \cr A_{21} & \hspace{-0.2cm} 0 \end{pmatrix}, C=\frac{1}{\sqrt{2}} \begin{pmatrix} C_1 & \hspace{-0.2cm} C_2 \end{pmatrix} , B=\frac{1}{\sqrt{2}} \begin{pmatrix} B_1 \cr B_2 \end{pmatrix}.$$
  It remains to observe that 
  $$ M= V^* \begin{pmatrix} \hat{M} & 0 \cr 0 & \hat{M} \end{pmatrix} V,
  $$ with $$ V=\begin{pmatrix} I & 0 & 0 & 0 & 0\cr 0 & 0 & 0 & I & 0 \cr 0 & 0 & 0 & 0 &\frac{1}{\sqrt{2}} I  \cr 0 & 0 & I & 0 & 0 \cr 0 & I & 0 & 0 & 0 \cr 0 & 0 & 0 & 0 & \frac{1}{\sqrt{2}} I  \end{pmatrix} , $$ so that $M$ is a contraction. Finally, the finite dimensionality in the rational matrix case follows from \cite{MR2839446} in the case of an inner function and from \cite[Theorem 1.3]{MR4359913} otherwise. 
\end{proof}

Next we consider determinantal representations of symmetric polynomials without roots in the (closed) bidisk. For polynomials in two variables without a symmetry requirement, this goes back to \cite{Kummert1989}; see also \cite{MR3441374}.

\begin{theorem}\label{detrepD2}
A scalar valued symmetric polynomial $p(z,\zeta)$ of degree $(n,n)$ has no roots in ${\mathbb D}^2$ if and only if $p$ has a determinantal representation
\begin{equation}\label{dr} p(z,\zeta ) = p(0,0) \det \left( I - \begin{pmatrix} A_1 & A_2\cr A_2 & A_1\end{pmatrix} \begin{pmatrix} zI & 0\cr 0 & \zeta I \end{pmatrix} \right)  \end{equation}
with 
\begin{equation}\label{contr} \begin{pmatrix} A_1 & A_2\cr A_2 & A_1\end{pmatrix} \end{equation}
a contraction. In addition, $p(z,\zeta)$ has no roots in $\overline{\mathbb D}^2$ if and only if $p$ has a determinantal representation \eqref{dr} with \eqref{contr} a strict contraction. In both cases the matrices $A_1$ and $A_2$ can be chosen to be of size at most $2n\times 2n$.  
\end{theorem}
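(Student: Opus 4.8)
The plan is to prove the two implications separately; the substance is the ``only if'' direction, which I would derive from the non-symmetric determinantal representation together with the uniqueness of minimal realizations. For the ``if'' direction, suppose $p$ is given by \eqref{dr}, and put $J=\begin{pmatrix}0 & I\cr I & 0\end{pmatrix}$ and $E(z,\zeta)=\begin{pmatrix}zI & 0\cr 0 & \zeta I\end{pmatrix}$. Since $J\begin{pmatrix}A_1 & A_2\cr A_2 & A_1\end{pmatrix}J=\begin{pmatrix}A_1 & A_2\cr A_2 & A_1\end{pmatrix}$ and $JE(z,\zeta)J=E(\zeta,z)$, conjugating inside the determinant shows $p(z,\zeta)=p(\zeta,z)$, so the representation is automatically symmetric. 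For $(z,\zeta)\in\DD^2$ the product of \eqref{contr} with $E(z,\zeta)$ has norm at most $\max(|z|,|\zeta|)<1$ when \eqref{contr} is a contraction, so $I-\cdots$ is invertible and $p$ has no zeros in $\DD^2$; if \eqref{contr} is a strict contraction the same bound holds throughout $\overline{\DD}^2$, giving no zeros there.

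For the ``only if'' direction I would first take $p$ symmetric of degree $(n,n)$ with no zeros in $\overline{\DD}^2$. By the non-symmetric determinantal representation \cite{Kummert1989,MR3441374} we may write $p(z,\zeta)=p(0,0)\det\!\big(I-\hat A\,E(z,\zeta)\big)$ with $\hat A$ a strict contraction of size $2n$, graded so that $E(z,\zeta)=\mathrm{diag}(zI_n,\zeta I_n)$, and this $\hat A$ can be taken to be the state matrix of a minimal unitary realization of the rational inner function $\phi=\overline p/p$, where $\overline p$ is the reflection of $p$. Because $p$ is symmetric so is $\phi$, and replacing $(z,\zeta)$ by $(\zeta,z)$ exhibits the swapped data $(J\hat AJ,\dots)$ as a second minimal realization of the same $\phi$. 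By the uniqueness of minimal realizations there is a grading-preserving unitary $T=\mathrm{diag}(T_1,T_2)$ with $J\hat AJ=T^{*}\hat AT$.

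The crux is to turn this self-intertwining into the block symmetry of \eqref{contr}. The relation $J\hat AJ=T^{*}\hat AT$ says exactly that $\hat A$ commutes with the block--anti-diagonal unitary $S=TJ=\begin{pmatrix}0 & T_1\cr T_2 & 0\end{pmatrix}$. The obstacle is that a priori $S$ need not be an involution; here I would use that applying the swap twice returns the original realization, so the composite self-intertwiner $T\,(JTJ)$ of a minimal realization must be the identity, forcing $JTJ=T^{*}$, i.e.\ $T_2=T_1^{*}$ and hence $S^{2}=I$. With $S$ a grading-respecting unitary involution, the graded unitary $R=\mathrm{diag}(T_1,I)$ satisfies $R^{*}SR=J$ and $R^{*}E(z,\zeta)R=E(z,\zeta)$, so $R^{*}\hat AR$ commutes with $J$ and therefore has the form $\begin{pmatrix}A_1 & A_2\cr A_2 & A_1\end{pmatrix}$ with $A_1,A_2$ of size $n$. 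Since $R$ is unitary, $R^{*}\hat AR$ is again a strict contraction and $\det(I-\hat AE)=\det\!\big(I-(R^{*}\hat AR)E\big)$, which is \eqref{dr}.

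Finally, for $p$ symmetric with no zeros in $\DD^2$ only, I would pass to the limit: the scaled polynomials $p_r(z,\zeta)=p(rz,r\zeta)$ are symmetric of degree $(n,n)$ and have no zeros in $\overline{\DD}^2$ for $r<1$, so each admits a symmetric representation with a strict contraction $A^{(r)}$ of size at most $2n$. As $r\uparrow1$ these matrices lie in a compact set, and any subsequential limit is a contraction of the same block form representing $p$. In all cases the bookkeeping keeps $A_1$ and $A_2$ of size at most $2n$, and I expect the cocycle normalisation $S^{2}=I$ and the identification of $\det(I-\hat AE)$ with $p$ to be the steps requiring the most care.
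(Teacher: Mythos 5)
Your ``if'' direction is fine. The ``only if'' direction, however, hinges on a step that is not available in two variables: the assertion that two minimal realizations of the rational inner function $\phi=\tilde p/p$ on ${\mathbb D}^2$ must be related by a grading-preserving unitary $T$ with $J\hat AJ=T^*\hat AT$, and, further, that a self-intertwiner of such a minimal realization must be the identity (which you need in order to force $JTJ=T^*$ and hence $S^2=I$). The state-space isomorphism theorem is a one-variable phenomenon; for Givone--Roesser/Agler-type realizations on the bidisk, minimal unitary realizations correspond to Agler decompositions of $\phi$, and these form a convex set that in general contains many mutually inequivalent elements, so neither the existence of the structured intertwiner $T$ nor the rigidity $T(JTJ)=I$ is justified --- and both are essential to your argument. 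A telling symptom: your construction would produce $A_1,A_2$ of size $n\times n$, which the authors explicitly record as an open question immediately after Theorem \ref{detrepD2}; a proof of the stated theorem should not settle that as a byproduct.

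The paper sidesteps this entirely by a change of variables rather than a change of basis: since $p$ is symmetric it is a polynomial $q$ in $\sigma=\frac12(z+\zeta)$ and $\delta^2=\bigl(\frac12(z-\zeta)\bigr)^2$, of bidegree $(n,\lfloor n/2\rfloor)$ in those variables and zero-free on the relevant bidisk; the non-symmetric determinantal representation is applied to $q$, and the substitution is then undone using a Schur-complement linearization of the resulting $s^2$ term. This yields the block-symmetric contraction directly, with $A_1,A_2$ of size $n+2\lfloor n/2\rfloor\le 2n$. Your closing compactness argument for passing from the closed bidisk to the open one is sound in principle but moot until the closed case is repaired; to rescue your route you would have to prove the required uniqueness/rigidity of minimal realizations for symmetric two-variable inner functions, which is precisely the obstruction the authors flag as open.
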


We will provide the proof of this result in the next section. The results in \cite{Kummert1989} and \cite{MR3441374} suggest that perhaps one can choose $A_1$ and $A_2$ to be of size $n\times n$, but whether this indeed holds is an open question.
We note that \cite[Example 2.4]{MR3441374} provides an illustration of the above result.

Let us consider the following Nevanlinna-Pick interpolation problem with symmetric data:

\medskip

(NP${\mathbb D}^2$-symm) Given are $(z_i, \zeta_i) \in {\mathbb D}^2$ and $w_i \in {\mathbb C}$, $i=1,\ldots, n$. Find, if possible, $f\in {\mathcal S}_{{\mathbb D}^2}({\mathcal U}, {\mathcal Y})$ so that $f(z_i,\zeta_i)=w_i$ \underline{and} $f(\zeta_i,z_i)=w_i$ $i=1,\ldots , n$.

\begin{prop}\label{NP1}
    There is a solution to {\rm (NP${\mathbb D}^2$-symm)} if and only there is a symmetric solution to {\rm (NP${\mathbb D}^2$-symm)}.
\end{prop}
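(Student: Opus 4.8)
The plan is to prove the nontrivial direction by symmetrizing an arbitrary solution; the ``if'' direction requires nothing, since any symmetric solution is in particular a solution of (NP${\mathbb D}^2$-symm).

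For the forward direction, suppose $f\in {\mathcal S}_{{\mathbb D}^2}({\mathcal U}, {\mathcal Y})$ solves the problem, so that $f(z_i,\zeta_i)=w_i$ and $f(\zeta_i,z_i)=w_i$ for each $i$. I would set
\[
g(z,\zeta) := \frac{1}{2}\bigl( f(z,\zeta) + f(\zeta,z) \bigr).
\]
First I would check that $g$ again lies in the Schur class. The coordinate swap $(z,\zeta)\mapsto (\zeta,z)$ preserves analyticity on ${\mathbb D}^2$ and leaves the supremum norm unchanged, so $f(\zeta,z)\in {\mathcal S}_{{\mathbb D}^2}({\mathcal U}, {\mathcal Y})$. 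Since the operator-norm bound $\|\cdot\|\le 1$ is preserved under averaging by the triangle inequality, the average $g$ is analytic with $\|g\|_\infty\le 1$, hence $g\in {\mathcal S}_{{\mathbb D}^2}({\mathcal U}, {\mathcal Y})$. (Equivalently, this is just convexity of the Schur class.)

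Next I would observe that $g$ is symmetric by construction, as swapping $z$ and $\zeta$ merely interchanges the two summands. Finally I would verify the interpolation conditions: using that $f$ satisfies \emph{both} $f(z_i,\zeta_i)=w_i$ and $f(\zeta_i,z_i)=w_i$, one computes $g(z_i,\zeta_i)=\frac{1}{2}(w_i+w_i)=w_i$ and likewise $g(\zeta_i,z_i)=w_i$. Thus $g$ is a symmetric solution, completing the proof.

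There is no serious obstacle here; the only point to be careful about is that the symmetrization succeeds precisely because the data are posed symmetrically, that is, the same value $w_i$ is prescribed at both $(z_i,\zeta_i)$ and $(\zeta_i,z_i)$. If the required values at the two swapped nodes differed, the averaged function would in general interpolate neither, and the equivalence would fail.
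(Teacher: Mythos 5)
Your proof is correct and follows exactly the paper's argument: symmetrize an arbitrary solution via $\hat f(z,\zeta)=\frac12(f(z,\zeta)+f(\zeta,z))$, which remains in the Schur class and still interpolates because the data prescribe the same value $w_i$ at both $(z_i,\zeta_i)$ and $(\zeta_i,z_i)$. You simply spell out the convexity and interpolation checks that the paper leaves implicit.
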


\begin{proof} The only if statement is trivial. For the if statement, observe that if
     $f$ solves (NP${\mathbb D}^2$-symm), then so does the symmetric function $\hat{f}(z,\zeta)=\frac12 (f(z,\zeta) + f(\zeta,z))$. 
\end{proof}

\section{The symmetrized bidisk}

As mentioned before, the symmetrized bidisk is the set \(\mathbb{G}=\{(s,p)\in \mathbb{C}^{2}:s=z+\zeta,p=z\zeta\text{\ for\ some\ }z,\zeta\in \mathbb{D}\}\).
We let ${\mathcal S}_{{\mathbb G}}({\mathcal U}, {\mathcal Y})$ denote the Schur class of analytic contractive ${\mathcal B}({\mathcal U}, {\mathcal Y})$-valued functions on ${\mathbb G}$. We start this section with a realization result, for which the infinite dimensional part was earlier established
in  \cite{MR3641771} and \cite{MR3724147}. Our proof relies on the results established in the previous section and also yields a finite dimensional version.

\begin{theorem}
    We have that $g(s,p)\in {\mathcal S}_{{\mathbb G}}({\mathcal U}, {\mathcal Y})$  if and only if there exists a contractive colligation matrix
    \begin{equation}\label{tM}\tilde{M}=\begin{pmatrix} \alpha_1 & 0 & \beta\cr 0 & \alpha_2 & 0 \cr \gamma & 0 & \delta \end{pmatrix} : \begin{matrix} {\mathcal H} \cr \oplus  \cr {\mathcal H} \cr \oplus\cr {\mathcal U}\end{matrix} \to \begin{matrix} {\mathcal H} \cr \oplus \cr  {\mathcal H}\cr \oplus \cr {\mathcal Y}\end{matrix}\end{equation} so that
    \begin{equation}\label{f3} g(s,p)= \delta + \frac12 \gamma (sI-2p\alpha_2)(I-\frac{s}{2} (\alpha_1+\alpha_2) +p\alpha_1\alpha_2)^{-1}\beta.\end{equation}
    Moreover, if $g$ is a rational matrix function then the colligation matrix can be chosen to act on finite dimensional spaces.
\end{theorem}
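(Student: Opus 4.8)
The plan is to transport everything to the bidisk through the symmetrization map $\pi(z,\zeta)=(z+\zeta,z\zeta)$ and then to lean on Theorem~\ref{thm1}. The bridge is that, since $\pi$ maps $\mathbb{D}^2$ \emph{onto} $\mathbb{G}$, a function $g$ lies in $\mathcal{S}_{\mathbb{G}}(\mathcal{U},\mathcal{Y})$ precisely when $f:=g\circ\pi$, i.e. $f(z,\zeta)=g(z+\zeta,z\zeta)$, is an analytic symmetric function on $\mathbb{D}^2$ with $\|f\|_\infty=\sup_{(s,p)\in\mathbb{G}}\|g(s,p)\|\le 1$; surjectivity of $\pi$ gives the norm equality and symmetry of $f$ is immediate from symmetry of $\pi$. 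So \eqref{ff} and \eqref{f3} ought to be two descriptions of the same object under $s=z+\zeta$, $p=z\zeta$, and the real content is a single algebraic identity linking them.

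For the ``only if'' direction I would take $g\in\mathcal{S}_{\mathbb{G}}$, form the symmetric $f=g\circ\pi\in\mathcal{S}_{\mathbb{D}^2}$, and apply Theorem~\ref{thm1} to get a contractive colligation of the structured form \eqref{coll} realizing $f$ via \eqref{ff} (finite dimensional when $g$, hence $f$, is rational). The decisive move is to diagonalize the symmetric state operator by the self-adjoint unitary $U=\tfrac{1}{\sqrt2}\begin{pmatrix}I&I\\I&-I\end{pmatrix}$: one has $U\begin{pmatrix}A_1&A_2\\A_2&A_1\end{pmatrix}U=\begin{pmatrix}A_1+A_2&0\\0&A_1-A_2\end{pmatrix}$ while $U\begin{pmatrix}zI&0\\0&\zeta I\end{pmatrix}U=\tfrac12\begin{pmatrix}sI&(z-\zeta)I\\(z-\zeta)I&sI\end{pmatrix}$. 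Setting $\alpha_1=A_1+A_2$, $\alpha_2=A_1-A_2$, $\beta=\sqrt2\,B$, $\gamma=\sqrt2\,C$, $\delta=D$, the operator $\mathrm{diag}(U,I)\,M\,\mathrm{diag}(U,I)$ is exactly the colligation $\tilde M$ of \eqref{tM}, so its contractivity (and, separately, that of $\alpha_2$) is free. It then remains to eliminate the antisymmetric state component by a Schur complement of the $(2,2)$ block $I-\tfrac{s}{2}\alpha_2$ and simplify, and this is where the work sits.

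The main obstacle is precisely this verification, and its subtlety is that the intermediate quantities genuinely involve the antisymmetric combination $z-\zeta$, which is not a function of $(s,p)$. The point is that it enters only squared, through $(z-\zeta)^2=s^2-4p$, together with the elementary fact that $\alpha_2$ commutes with $(I-\tfrac{s}{2}\alpha_2)^{-1}$; with these one checks that the Schur complement $S$ satisfies $S\,(I-\tfrac{s}{2}\alpha_2)=I-\tfrac{s}{2}(\alpha_1+\alpha_2)+p\,\alpha_1\alpha_2$ and that the output vector collapses to $sI-2p\,\alpha_2$, turning \eqref{ff} into \eqref{f3}, the factor $\tfrac12$ absorbing the two factors $\sqrt2$. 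For the ``if'' direction I would run this in reverse: given contractive $\tilde M$ as in \eqref{tM}, set $A_1=\tfrac12(\alpha_1+\alpha_2)$, $A_2=\tfrac12(\alpha_1-\alpha_2)$, $B=\tfrac{1}{\sqrt2}\beta$, $C=\tfrac{1}{\sqrt2}\gamma$, $D=\delta$, observe that $\mathrm{diag}(U,I)\,\tilde M\,\mathrm{diag}(U,I)$ is a contractive colligation of the form \eqref{coll}, invoke the easy direction of Theorem~\ref{thm1} to get that $f$ defined by \eqref{ff} is a symmetric element of $\mathcal{S}_{\mathbb{D}^2}$, verify by the same identity that $f=g\circ\pi$ with $g$ as in \eqref{f3} (the relevant inverse existing on $\mathbb{G}$ because $\|\tfrac{s}{2}\alpha_2\|<1$ there and the resolvent in \eqref{ff} exists on $\mathbb{D}^2$), and transfer the bound to $g$ by surjectivity of $\pi$. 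Finite dimensionality is inherited verbatim: rational $g$ forces $f$ rational, Theorem~\ref{thm1} supplies a finite dimensional $M$, and conjugation by $\mathrm{diag}(U,I)$ preserves finite dimensionality.
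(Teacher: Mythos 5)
Your proposal is correct and follows essentially the same route as the paper: pull $g$ back to the symmetric function $f=g\circ\pi$ on $\mathbb{D}^2$, apply Theorem~\ref{thm1}, conjugate the colligation by $\mathrm{diag}(U,I)$ with $U=\tfrac{1}{\sqrt2}\left(\begin{smallmatrix}I&I\\I&-I\end{smallmatrix}\right)$ to get $\tilde M$ with $\alpha_1=A_1+A_2$, $\alpha_2=A_1-A_2$, $\beta=\sqrt2 B$, $\gamma=\sqrt2 C$, and verify \eqref{f3} by a Schur complement using $(z-\zeta)^2=s^2-4p$, reversing the steps for the converse. The only (minor, harmless) divergence is that you conjugate $\mathrm{diag}(zI,\zeta I)$ directly rather than inverting it first, which lets you skip the paper's restriction to $z\neq0\neq\zeta$ and the subsequent continuity argument.
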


\begin{proof}
    Given $g(s,p)\in {\mathcal S}_{{\mathbb G}}({\mathcal U}, {\mathcal Y})$, we let $f(z,\zeta) = g(z+\zeta, z\zeta).$ Then $f$ satisfies the conditions of Theorem \ref{thm1} and thus we may write $f$ as in \eqref{ff}. Put now
    $$ \tilde{M}:= V^*MV,$$
    where \begin{equation}\label{U} V=U\oplus I , U=\begin{pmatrix} \frac{1}{\sqrt{2}} I & \frac{1}{\sqrt{2}} I \cr \frac{1}{\sqrt{2}} I & -\frac{1}{\sqrt{2}} I  \end{pmatrix}.\end{equation} Then $\tilde{M}$ equals \eqref{tM} where
    $$\alpha_1 = A_1+A_2, \alpha_2 = A_1-A_2, \gamma = \sqrt{2} C, \beta = \sqrt{2}B, \delta=D. $$ It is now a straightforward calculation that $g(s,p)$ equals \eqref{f3}. Indeed, assuming that $z\neq 0, \zeta \neq 0$, we may rewrite \eqref{ff} as
    \begin{equation}\label{f4} f(z,\zeta)= D + \begin{pmatrix} C &  \hspace{-0.2cm} C \end{pmatrix} U^* \left( U^*\begin{pmatrix} \frac{1}{z}I & 0 \cr 0 & \frac{1}{\zeta} \end{pmatrix} U - U^*\begin{pmatrix} A_1 & A_2 \cr A_2 & A_1 \end{pmatrix} U \right)^{-1} \hspace{-0.2cm} U\begin{pmatrix} B \cr B \end{pmatrix} =\end{equation}
    $$ \delta + \begin{pmatrix} \gamma & 0 \end{pmatrix} \left( \frac12\begin{pmatrix} \hat{s}I & \hat{d}I \cr \hat{d}I & \hat{s} I\end{pmatrix} - \begin{pmatrix} \alpha_1 & 0 \cr 0 & \alpha_2 \end{pmatrix} \right)^{-1}\begin{pmatrix} \beta \cr 0\end{pmatrix}= $$
    $$ \delta+ \gamma ( \frac12 \hat{s} - \alpha_1 -\frac14 \hat{d}^2(\frac12\hat{s} - \alpha_2)^{-1})^{-1} \beta, $$
    where $\hat{s}=\frac1z+\frac{1}{\zeta}, \hat{d} = \frac1z-\frac{1}{\zeta}.$ Let us also put $s=z+\zeta, p=z\zeta$, and observe that $\hat{s}=\frac{s}{p}, \hat{d}^2=\frac{s^2-4p}{p^2}$. Using these observations, it is a simple calculation to see that $g(s,p)=f(z_1,z_2)$ is as in \eqref{f3}. The latter equation is also satisfied when $z=0$ or $\zeta=0$ (using continuity).

    The steps in the above calculation can be reversed, so that if $g(s,p)$ is given by \eqref{f4}, then $g(s,p)=f(z,\zeta)$ with $f$ as in \eqref{ff}. But then, since $f \in {\mathcal S}_{{\mathbb D}^2}({\mathcal U}, {\mathcal Y})$, it follows that $g\in {\mathcal S}_{{\mathbb G}}({\mathcal U}, {\mathcal Y})$.
\end{proof}

\begin{remark}\label{remark}\rm
    Notice that the above results provide a way to find a realization for $g(s,p)$ as follows. Put $f(z,\zeta)=g(z+\zeta, z\zeta)$ and find a realization for $f$ as in \eqref{f2}, for instance by using the methods described in \cite{MR4359913}. Then setting 
    $$ \alpha_1 = \begin{pmatrix} A_{11} & A_{12}\cr A_{21} & A_{22}\end{pmatrix}, \alpha_2 = \begin{pmatrix} A_{11} & \hspace{-.2cm} -A_{12}\cr -A_{21} & \hspace{-.2cm} A_{22}\end{pmatrix}, \beta= \begin{pmatrix} B_1 \cr B_2 \end{pmatrix}, \gamma= \begin{pmatrix} C_1 & \hspace{-0.2cm} C_2 \end{pmatrix}, \delta=D, $$
    gives a realization \eqref{f3} for $g(s,p).$ 
\end{remark}

Next, we would like to present a determinantal representation result. 

\begin{theorem}\label{detrepG}
     Let $g(s,p)$ be a polynomial. Then $g(s,p)$ is without roots in ${\mathbb G}$ if and only if there  exists a contraction \eqref{contr} such that 
    \begin{equation}\label{gh} g(s,p)= g(0,0) \det (I -sA_1 + p(A_1+A_2)(A_1-A_2)).\end{equation}
    In addition, $g(s,p)$ is without roots in $\overline{\mathbb G}$ if and only if there exists a strict contraction \eqref{contr} so that 
    \eqref{gh} holds.
\end{theorem}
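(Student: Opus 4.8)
The plan is to push everything back to the bidisk and invoke Theorem~\ref{detrepD2}. Given the polynomial $g(s,p)$, I would introduce its pullback $q(z,\zeta):=g(z+\zeta,z\zeta)$ under the symmetrization map $\sigma(z,\zeta)=(z+\zeta,z\zeta)$. This $q$ is automatically symmetric, $q(z,\zeta)=q(\zeta,z)$, so its two partial degrees coincide and it has some bidegree $(n,n)$; also $q(0,0)=g(0,0)$. The strategy is then: (i) transfer the ``no roots'' hypothesis between $g$ on $\mathbb{G}$ and $q$ on $\mathbb{D}^2$; (ii) apply Theorem~\ref{detrepD2} to $q$; and (iii) convert the resulting determinantal representation \eqref{dr} of $q$ into the representation \eqref{gh} of $g$ via an algebraic determinant identity.

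First I would settle the root correspondence. Since $\sigma$ maps $\mathbb{D}^2$ onto $\mathbb{G}$ by the definition of $\mathbb{G}$, and maps $\overline{\mathbb{D}}^2$ onto $\overline{\mathbb{G}}$ (the image is compact, contains $\mathbb{G}$, and is contained in $\overline{\mathbb{G}}$), the pullback $q=g\circ\sigma$ vanishes somewhere on $\mathbb{D}^2$ (resp.\ $\overline{\mathbb{D}}^2$) if and only if $g$ vanishes somewhere on $\mathbb{G}$ (resp.\ $\overline{\mathbb{G}}$). Hence ``$g$ has no roots in $\mathbb{G}$'' is equivalent to ``$q$ has no roots in $\mathbb{D}^2$'', and likewise for the closures.

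The crux is the determinant identity. Writing $T=\begin{pmatrix}A_1&A_2\\A_2&A_1\end{pmatrix}$, $Z=\begin{pmatrix}zI&0\\0&\zeta I\end{pmatrix}$, $s=z+\zeta$, $p=z\zeta$, I would prove
\[
\det(I-TZ)=\det\bigl(I-sA_1+p(A_1+A_2)(A_1-A_2)\bigr)
\]
as an identity in $A_1,A_2,z,\zeta$. The route mirrors the realization theorem: conjugating by $V=U\oplus I$ with $U$ as in \eqref{U} gives $U^*TU=\mathrm{diag}(\alpha_1,\alpha_2)$, where $\alpha_1=A_1+A_2$, $\alpha_2=A_1-A_2$, and $U^*ZU=\frac12\begin{pmatrix}sI&dI\\dI&sI\end{pmatrix}$ with $d=z-\zeta$, so that $\det(I-TZ)=\det\begin{pmatrix}I-\frac{s}{2}\alpha_1&-\frac{d}{2}\alpha_1\\ -\frac{d}{2}\alpha_2&I-\frac{s}{2}\alpha_2\end{pmatrix}$. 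For $s$ with $I-\frac{s}{2}\alpha_2$ invertible I would take the Schur complement of the $(2,2)$ block; the key point is that $I-\frac{s}{2}\alpha_2$ commutes with $\alpha_2$, so the conjugation produced by the Schur complement collapses and, after multiplying the two determinant factors and substituting $p=\frac14(s^2-d^2)$, one is left with the single factor $I-\frac{s}{2}(\alpha_1+\alpha_2)+p\,\alpha_1\alpha_2=I-sA_1+p(A_1+A_2)(A_1-A_2)$. Both sides being polynomials that agree on an open set, the identity holds everywhere. I expect this non-commutative bookkeeping to be the main obstacle: one must verify that the ordering $\alpha_1\alpha_2$ (not $\alpha_2\alpha_1$) emerges; the $(2,2)$-Schur complement delivers $\alpha_1\alpha_2$ while the $(1,1)$-Schur complement delivers $\alpha_2\alpha_1$, and the two determinants necessarily coincide.

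Assembling the pieces: if $g$ has no roots in $\mathbb{G}$, then $q$ has no roots in $\mathbb{D}^2$, so Theorem~\ref{detrepD2} supplies \eqref{dr} with $T$ a contraction and $A_1,A_2$ of size at most $2n$; the determinant identity rewrites this as \eqref{gh}. Since $g(s,p)$ and the right-hand side of \eqref{gh} are polynomials agreeing on $\mathbb{G}$, which has nonempty interior, they agree identically. The converse reverses the steps: given \eqref{gh} with a contraction, the determinant identity shows $q=g\circ\sigma$ satisfies \eqref{dr}, so Theorem~\ref{detrepD2} makes $q$ root-free on $\mathbb{D}^2$ and the root correspondence makes $g$ root-free on $\mathbb{G}$. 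The $\overline{\mathbb{G}}$ statement is proved identically, replacing ``contraction'' and $\mathbb{D}^2$ by ``strict contraction'' and $\overline{\mathbb{D}}^2$ and using the closure version of the root correspondence.
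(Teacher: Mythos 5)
Your step (i) (the root correspondence under the symmetrization map, including for the closures) and your step (iii) (the determinant identity
\begin{equation*}
\det\left(I-\begin{pmatrix} A_1 & A_2\cr A_2 & A_1\end{pmatrix}\begin{pmatrix} zI & 0\cr 0 & \zeta I\end{pmatrix}\right)=\det\bigl(I-sA_1+p(A_1+A_2)(A_1-A_2)\bigr)
\end{equation*}
via conjugation by $U$ and a Schur complement) are both correct; indeed the paper performs essentially the same computation. The fatal problem is step (ii). In the paper, Theorem~\ref{detrepD2} is only \emph{stated} in Section~2; its proof is explicitly deferred to Section~3, where it is deduced \emph{from} Theorem~\ref{detrepG} together with the very determinant identity you establish. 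So proving Theorem~\ref{detrepG} by ``invoking Theorem~\ref{detrepD2}'' reverses the paper's logical order and, within this paper, is circular: the only available proof of the result you are citing depends on the result you are trying to prove.

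The substance you are missing is an independent construction of the \emph{structured} contraction $\left(\begin{smallmatrix} A_1 & A_2\cr A_2 & A_1\end{smallmatrix}\right)$. The known two-variable determinantal representation results (\cite{Kummert1989}, \cite{MR3441374}) give \eqref{dr}-type representations \emph{without} the symmetry constraint on the colligation, and imposing that constraint is exactly the nontrivial point (the paper even remarks that the optimal size in the symmetric case is an open question, underscoring that Theorem~\ref{detrepD2} is not a routine corollary of the unstructured result). The paper avoids the circle by working directly with $g$: it changes variables to $\sigma=\frac12(z+\zeta)$ and $\delta^2=\frac14(z-\zeta)^2$, in which the symmetric pullback becomes an \emph{unstructured} stable polynomial of bidegree $(n,m)$ with $m=\lfloor n/2\rfloor$; it applies \cite[Theorem 2.1]{MR3441374} there; and it then linearizes the resulting $s^2$-term by a Schur-complement augmentation to manufacture $A_1,A_2$ of size $n+2m\le 2n$ satisfying \eqref{gh}, checking contractivity via $\|A_1\pm A_2\|\le 1$. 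To salvage your architecture you would have to supply a self-contained proof of Theorem~\ref{detrepD2}; otherwise you should run the reduction in the paper's direction.
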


\begin{proof}
    Let $g(s,p)$ be without roots in ${\mathbb G}$. For ease of the presentation, let us assume that $g(0,0)=1$ (otherwise, consider $\frac{g(s,p)}{g(0,0)}$). Put $p(z,\zeta)=g(z+\zeta, z\zeta).$ Then $p(z,\zeta)$ is symmetric and without roots in the closed bidisk. Let $p(z,\zeta)$ be of degree $(n,n)$. Let $\sigma=\frac12(z+\zeta)$ and $\delta=\frac12 (z-\zeta)$. Then $p(z,\zeta)=p(\sigma+\delta,\sigma-\delta)=q(\sigma,\delta)$ for some polynomial $q$. Since $p(z,\zeta)=p(\zeta,z)$ we get that $q(\sigma,\delta)=q(\sigma,-\delta)$. Thus $q$ is even in $\delta$, so $q$ is a polynomial in $\sigma$ and $\delta^2$. Moreover, $q$ as a polynomial in $\sigma$ and $\delta^2$ is of degree $(n,m)$, where $m=\lfloor \frac{n}{2} \rfloor$, and $q$ is without roots in ${\mathbb D}^2$. Applying \cite[Theorem 2.1]{MR3441374} to $q$ as a polynomial in $\sigma$ and $\delta^2$, we obtain that there exists a $(n+m)\times (n+m)$ contraction
    $$ \begin{pmatrix} K_{11} & K_{12}\cr K_{21} & K_{22}\end{pmatrix}$$ such that
    $$ q(\sigma,\delta)= \det \left( I_{n+m} - 
    \begin{pmatrix} K_{11} & K_{12}\cr K_{21} & K_{22}\end{pmatrix} \begin{pmatrix} \sigma I_n & 0\cr 0 & \delta^2 I_m\end{pmatrix} \right).$$
    Let $s=z+\zeta$ and $p=z\zeta$. Then $\sigma=\frac{s}{2}$ and $\delta^2=-p+\frac{s^2}{4}$ and since $g(s,p)=q(\sigma,\delta)$, we find that
    $$ g(s,p) = \det \left( I - \begin{pmatrix} \frac12 K_{11} & -K_{12}\cr \frac12 K_{21} & -K_{22}\end{pmatrix} \begin{pmatrix} s I_n & 0\cr 0 & p I_m\end{pmatrix} - \frac{s^2}{4} \begin{pmatrix} 0 & K_{12}\cr 0 & K_{22}\end{pmatrix} \right). $$ We may rewrite this, using Schur complements, as
    $$ g(s,p) = \det \left( I - s \begin{pmatrix} \frac12 K_{11} & 0 & -\frac12 K_{12}\cr \frac12 K_{21} & 0 &  -\frac 12 K_{22}\cr 0 & -\frac12 I_m & 0 \end{pmatrix} +p \begin{pmatrix} 0 & K_{12} & 0\cr 0 & K_{22} & 0 \cr 0 & 0 & 0 \end{pmatrix} \right). $$
Let now $$ A_1= \begin{pmatrix} \frac12 K_{11} & 0 & -\frac12 K_{12}\cr \frac12 K_{21} & 0 &  -\frac 12 K_{22}\cr 0 & -\frac12 I_m & 0 \end{pmatrix} , A_2 = \begin{pmatrix} \frac12 K_{11} & 0 & -\frac12 K_{12}\cr \frac12 K_{21} & 0 &  -\frac 12 K_{22}\cr 0 & \frac12 I_m & 0 \end{pmatrix}.$$ Then \eqref{gh} follows. Note also that $A_1$ and $A_2$ are of size $n+2m \le 2n$. Finally, it is easy to check that $\eqref{contr}$ is a contraction (as, by using \eqref{U}, $\eqref{contr}$ being a contraction is equivalent to $\| A_1 \pm A_2 \| \le 1$). 

If $g(s,p)$ is without roots in $\overline{\mathbb G}$, then there exists $R>1$ so that $g(Rs,R^2p)$ is without roots $(s,p)\in {\mathbb G}$. This yields that $g(Rs,R^2p)$ may be expressed as the righthand side of \eqref{gh}. But then $\frac{1}{R}A_1$ and $\frac{1}{R} A_2$ yields the desired strict contraction in the determinantal representation \eqref{gh} for $g(s,p)$. 
\end{proof}

{\em Proof of Theorem \ref{detrepD2}.}
Let $p(z,\zeta)$ be a symmetric polynomial without roots in ${\mathbb D}^2$.
Let $s=z+\zeta$ and $d=z-\zeta$. Then $p(z,\zeta)=p(\frac12(s+d),\frac12 (s-d))=q(s,d)$ for some polynomial $q$. Since $p(z,\zeta)=p(\zeta,z)$ we get that $q(s,d)=q(s,-d)$. Thus $q$ is even in $d$, so $q$ is a polynomial in $s$ and $d^2$. As $d^2= s^2 -4p$, where $p=z\zeta$, we get that $q$ is in fact a polynomial $g(s,p)$ in $s$ and $p$. Thus $p(z,\zeta) = g(s,p)$, and as $p$ is without roots in ${\mathbb D}^2$ the polynomial $g(s,p)$ is without roots in ${\mathbb G}$. Now, Theorem \ref{detrepG} yields the existence of matrices $A_1$ and $A_2$ of size $\ell \le 2n$ so that \eqref{contr} is a contraction and \eqref{gh} holds. We claim that the right hand side of \eqref{dr} equals \eqref{gh}, and we use a similar calculation as in \eqref{f4}. For convenience we assume that $g(0,0)=p(0,0)=1$.

Let $z\neq 0 \neq \zeta$ and put $\hat{s}=\frac1z+\frac{1}{\zeta}, \hat{d} = \frac1z-\frac{1}{\zeta}.$ Then $\hat{s}=\frac{s}{p}, \hat{d}^2=\frac{s^2-4p}{p^2}$. Moreover, using $U$ as in \eqref{U},
$$ \det U\left( I - \begin{pmatrix} A_1 & A_2\cr A_2 & A_1\end{pmatrix} \begin{pmatrix} zI_\ell & 0\cr 0 & \zeta I_\ell \end{pmatrix} \right)U^* = $$ $$z^\ell \zeta^\ell \det \left( \frac12\begin{pmatrix} \hat{s}I & \hat{d}I \cr \hat{d}I & \hat{s} I\end{pmatrix} - \begin{pmatrix} A_1+A_2 & 0 \cr 0 & A_1-A_2 \end{pmatrix} \right) = $$
$$ z^\ell \zeta^\ell \det \left( (\frac{\hat{s}}{2}I_\ell -(A_1+A_2)) (\frac{\hat{s}}{2} I_\ell -(A_1-A_2)) - \frac{\hat{d}^2}{4}I_\ell \right) = $$
$$ p^\ell \det(\frac{1}{p}I_l - \frac{s}{p} A_1 +(A_1+A_2)(A_1-A_2)) = g(s,p).$$ This proves that \eqref{dr} holds (where we observe that for $z=0$ or $\zeta=0$ the equality follows by continuity).

When $p(z,\zeta)$ has no roots in $\overline{\mathbb D}^2$ one may find an $R>1$ so that $p(Rz,R\zeta)$ has not roots for $(z,\zeta) \in {\mathbb D}^2$. One can thus represent $p(Rz,R\zeta)$ as the righthand side of \eqref{dr}. But then {\tiny $\frac{1}{R} \begin{pmatrix} A_1 & A_2\cr A_2 & A_1\end{pmatrix}$} is the desired strict contraction.
\hfill $\Box$

\medskip

Let us consider the following Nevanlinna-Pick interpolation problem:

\medskip

(NP${\mathbb G}$) Given are $(s_i, p_i) \in {\mathbb G}$ and $w_i \in {\mathbb C}$, $i=1,\ldots, n$. Find, if possible, $g\in {\mathcal S}_{{\mathbb G}}({\mathcal U}, {\mathcal Y})$ so that $g(s_i,p_i)=w_i$, $i=1,\ldots , n$.

\medskip

We also introduce the associated Nevanlinna-Pick interpolation problem on ${\mathbb D}^2$. For each $(s_i, p_i) \in {\mathbb G}$ there is a pair $(z_i, \zeta_i) \in {\mathbb D}^2$ so that $s_i=z_i+\zeta_i$ and $p_i=z_i \zeta_i$. With this data we can now consider (NP${\mathbb D}_2$-symm). The following Proposition was observed earlier; see \cite{MR3641771, MR3724147, MR4900535}. 

\begin{prop}\label{NP}
    There is a solution to {\rm (NP${\mathbb G}$)} if and only there is a solution to the associated {\rm (NP${\mathbb D}^2$-symm)}.
\end{prop}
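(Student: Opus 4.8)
The plan is to prove the equivalence by passing between the two interpolation problems using the symmetrization map $(z,\zeta)\mapsto(s,p)=(z+\zeta,z\zeta)$, exactly as was done in the realization and determinantal results. The key observation is that there is a natural bijection between candidate solutions of the two problems: to $g\in{\mathcal S}_{{\mathbb G}}({\mathcal U},{\mathcal Y})$ we associate $f(z,\zeta)=g(z+\zeta,z\zeta)$, and conversely, to a \emph{symmetric} $f\in{\mathcal S}_{{\mathbb D}^2}({\mathcal U},{\mathcal Y})$ we wish to associate a $g$ on ${\mathbb G}$.

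First I would prove the ``only if'' direction. Suppose $g$ solves (NP${\mathbb G}$), so $g(s_i,p_i)=w_i$ for all $i$. Set $f(z,\zeta)=g(z+\zeta,z\zeta)$. Since the symmetrization map sends ${\mathbb D}^2$ into ${\mathbb G}$ and is analytic, $f\in{\mathcal S}_{{\mathbb D}^2}({\mathcal U},{\mathcal Y})$, and $f$ is manifestly symmetric. For each $i$, choosing $(z_i,\zeta_i)$ with $s_i=z_i+\zeta_i$, $p_i=z_i\zeta_i$ gives $f(z_i,\zeta_i)=g(s_i,p_i)=w_i$, and by symmetry $f(\zeta_i,z_i)=f(z_i,\zeta_i)=w_i$. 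Hence $f$ solves the associated (NP${\mathbb D}^2$-symm).

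Next I would prove the ``if'' direction, where the real content lies. Suppose (NP${\mathbb D}^2$-symm) has a solution. By Proposition \ref{NP1} we may take a \emph{symmetric} solution $f\in{\mathcal S}_{{\mathbb D}^2}({\mathcal U},{\mathcal Y})$. The crucial step is to descend $f$ to a well-defined function $g$ on ${\mathbb G}$ with $f(z,\zeta)=g(z+\zeta,z\zeta)$. Because $f$ is symmetric and the two preimages of a generic $(s,p)\in{\mathbb G}$ under symmetrization are exactly $(z,\zeta)$ and $(\zeta,z)$, the formula $g(s,p):=f(z,\zeta)$ is unambiguous on the set of $(s,p)$ with $z\neq\zeta$; analyticity of $g$ then follows on the full of ${\mathbb G}$ by standard removable-singularity arguments across the diagonal $s^2=4p$ (the symmetrization map is a branched double cover ramified there, but a symmetric analytic function pushes forward analytically). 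Once $g$ is shown analytic on ${\mathbb G}$ with $\|g\|_\infty\le 1$, we have $g\in{\mathcal S}_{{\mathbb G}}({\mathcal U},{\mathcal Y})$, and $g(s_i,p_i)=f(z_i,\zeta_i)=w_i$ shows $g$ solves (NP${\mathbb G}$).

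The main obstacle is establishing that the pushed-forward $g$ is genuinely analytic across the critical diagonal $\{s^2=4p\}$, rather than merely continuous; this is precisely where the symmetry hypothesis $f(z,\zeta)=f(\zeta,z)$ is used, through the fact that a symmetric analytic function of $(z,\zeta)$ is an analytic function of the elementary symmetric polynomials $s$ and $p$. Since the realization Theorem already exhibits this passage concretely for symmetric Schur functions, I would either invoke that construction directly or cite the standard fundamental theorem on analytic symmetric functions to close this gap cleanly.
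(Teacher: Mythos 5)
Your proposal is correct, and the ``only if'' direction coincides with the paper's. For the ``if'' direction you and the paper both begin by symmetrizing $f$ to $\hat f(z,\zeta)=\tfrac12(f(z,\zeta)+f(\zeta,z))$, but then diverge: you push $\hat f$ forward through the symmetrization map directly, defining $g(s,p):=\hat f(z,\zeta)$ off the branch locus and recovering analyticity of $g$ across $\{s^2=4p\}$ by Riemann's removable-singularity theorem (the push-forward is locally bounded by $\|\hat f\|_\infty\le 1$ and holomorphic off a thin analytic set), or equivalently by the analytic form of the fundamental theorem of symmetric functions. The paper instead invokes the realization Theorem~\ref{thm1} to write $\hat f$ as in \eqref{ff} and then \emph{defines} $g$ by the explicit formula \eqref{f3}/\eqref{f4}, so that analyticity and contractivity of $g$ on ${\mathbb G}$ come for free from the realization theorem for ${\mathcal S}_{{\mathbb G}}$ already proved, and one additionally obtains a concrete formula for the interpolant (which the paper then exploits in the worked example). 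Your route is more self-contained conceptually but leans on a standard-yet-nontrivial extension fact (which for operator-valued functions should be reduced to the scalar case via weak analyticity plus uniform boundedness); your stated fallback of ``invoking that construction directly'' is precisely the paper's argument. Either way the interpolation conditions $g(s_i,p_i)=\hat f(z_i,\zeta_i)=w_i$ follow immediately, so the proof closes.
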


\begin{proof}
    Suppose $g\in {\mathcal S}_{{\mathbb G}}({\mathcal U}, {\mathcal Y})$ solves (NP${\mathbb G}$). Put $f(z,\zeta) = g(z+\zeta, z\zeta)$. Then $f$ solves (NP${\mathbb D}^2$-symm).

    Suppose that $f$ solves (NP${\mathbb D}^2$-symm). Then so does $\hat{f}(z,\zeta)=\frac12 (f(z,\zeta) + f(\zeta,z))$. The latter has a representation as in \eqref{ff}. Put $$\alpha_1 = A_1+A_2, \alpha_2 = A_1-A_2, \gamma = \sqrt{2} C, \beta = \sqrt{2}B, \delta=D ,  $$ and define $g$ via \eqref{f4}. Then $g\in {\mathcal S}_{{\mathbb G}}({\mathcal U}, {\mathcal Y})$ solves (NP${\mathbb G}$).
\end{proof}

While the papers \cite{MR3641771, MR3724147} provide necessary and sufficient conditions for a solution to (NP${\mathbb G}$), it may be easier to pull the problem back to the bidisk as suggested in \cite[before Theorem 5.1]{MR3641771}. Indeed, for (NP${\mathbb D}^2$-symm) the criterion comes down to checking the existence of a pair $(K_1,K_2)$ of positive semidefinite matrices satisfying the following linear constraints (see \cite{MR1665697})
$$ (1-\overline{w_i}w_j)_{i,j=1}^n = K_1 \circ (1-\overline{z_i}z_j)_{i,j=1}^n + K_2 \circ (1-\overline{\zeta_i}\zeta_j)_{i,j=1}^n, $$
and is thus verifiable by SemiDefinite Programming (SDP). Here $\circ$ denotes the Schur (=entrywise) product of matrices. In addition, algorithms have been developed to find solutions to (NP${\mathbb D}^2$-symm); see, e.g., \cite{MR1665697,AM, AMbook}. Thus, if we combine this with Remark \ref{remark}, we provide a way to find solutions to (NP${\mathbb G}$). We illustrate this with an example.

\begin{example}\rm Following \cite{MR4900535} we consider (NP${\mathbb G}$) with data
$$ s_1=0, p_1=0, w_1=0, s_2=0, p_2=\frac12, w_2=\frac12. $$ Then the associated (NP${\mathbb D}^2$-symm) problem is
$$ z_1=0, \zeta_1=0, w_1=0, z_2=\frac{i}{\sqrt{2}}=\zeta_3, \zeta_2=\frac{-i}{\sqrt{2}}=z_3, w_2=\frac12=w_3. $$
Checking the conditions for a solution for (NP${\mathbb D}^2$), we need to find positive semidefinite matrices $K_1$ and $K_2$ so that
$$ \begin{pmatrix}
    1 & 1 & 1\cr 1 & \frac34 & \frac34 \cr 1  & \frac34 & \frac34 
\end{pmatrix}= K_1 \circ \begin{pmatrix}
    1 & 1 & 1\cr 1 & \frac12 & \frac32 \cr 1 & \frac32 & \frac12 
\end{pmatrix} + K_2 \circ \begin{pmatrix}
    1 & 1 & 1\cr 1 & \frac12 & \frac32 \cr 1 & \frac32 & \frac12 
\end{pmatrix}. $$
 One easily finds the choice
$$ K_1=K_2 = \begin{pmatrix}
    \frac12 & \frac12 & \frac12 \cr \frac12 & \frac34 & \frac14 \cr \frac12 & \frac14 & \frac34 
\end{pmatrix},$$
which is positive semidefinite. To obtain a function that interpolates the given data, one needs to perform a lurking isometry (or, actually, lurking contraction) argument. Note that we may factor $K_1=K_2=U^*U$ with 
$$U = \begin{pmatrix}
    \frac{1}{\sqrt{2}} & \frac{1}{\sqrt{2}} & \frac{1}{\sqrt{2}} \cr 0 & \frac{1}{2}  &-\frac{1}{2}
\end{pmatrix}.$$
Hence if  $$u_1 = \begin{pmatrix}
    \frac{1}{\sqrt{2}} \cr 0
\end{pmatrix}, \quad u_2 = \begin{pmatrix}
    \frac{1}{\sqrt{2}} \cr \frac{1}{2}
\end{pmatrix}, \quad u_3 = \begin{pmatrix}
    \frac{1}{\sqrt{2}} \cr -\frac{1}{2}
\end{pmatrix},$$
we are looking for a contraction 
$$V: \left \langle \begin{pmatrix}
    1 \cr z_ju_j \cr \zeta_ju_j
\end{pmatrix}\right\rangle \to \left \langle \begin{pmatrix} w_j \cr u_j \cr u_j
\end{pmatrix} \right \rangle, \quad j=1,2,3$$
acting on $\mathbb{C}^5$. This leads to
$$V
\begin{pmatrix}
1 & 1 & 1 \cr
0 & \frac{i}{2} & -\frac{i}{2} \cr
0 & \frac{i}{2\sqrt{2}} & \frac{i}{2\sqrt{2}} \cr
0 & -\frac{i}{2} & \frac{i}{2} \cr
0 & -\frac{i}{2\sqrt{2}} & -\frac{i}{2\sqrt{2}}
\end{pmatrix}
=
\begin{pmatrix}
0 & \frac{1}{2} & \frac{1}{2} \cr
\frac{1}{\sqrt{2}} & \frac{1}{\sqrt{2}} & \frac{1}{\sqrt{2}} \cr
0 & \frac{1}{2} & -\frac{1}{2} \cr
\frac{1}{\sqrt{2}} & \frac{1}{\sqrt{2}} & \frac{1}{\sqrt{2}} \cr
0 & \frac{1}{2} & -\frac{1}{2}
\end{pmatrix}.$$
One of the possible choices is
$$V = \begin{pmatrix}
0 & 0 & -\frac{i}{\sqrt{2}} & 0 & \frac{i}{\sqrt{2}} \cr
\frac{1}{\sqrt{2}} & 0 & 0 & 0 & 0 \cr
0 & 0 & 0 & i & 0 \cr
\frac{1}{\sqrt{2}} & 0 & 0 & 0 & 0 \cr
0 & -i & 0 & 0 & 0
\end{pmatrix}.$$
Using Remark \ref{remark}, this leads to 
$$\alpha_1 = \begin{pmatrix} 
0 & 0 & 0 & 0 \cr 0 & 0 & i & 0 \cr0 & 0 & 0 & 0 \cr-i & 0 & 0 & 0
\end{pmatrix}, \alpha_2 = \begin{pmatrix} 
0 & 0 & 0 & 0 \cr 0 & 0 & -i & 0 \cr0 & 0 & 0 & 0 \cr i & 0 & 0 & 0
\end{pmatrix}, $$ $$ \beta = \begin{pmatrix} \frac{1}{\sqrt{2}} \cr 0 \cr \frac{1}{\sqrt{2}} \cr 0\end{pmatrix}, \gamma = \begin{pmatrix}
    0 & -\frac{i}{\sqrt{2}} & 0 & \frac{i}{\sqrt{2}}
\end{pmatrix}, \delta = 0,$$ 
and formula \eqref{f3} gives $g(s,p)=p$ as a solution to (NP${\mathbb G}$). 

Another choice for $V$ is
$$\begin{pmatrix}
0 & 0 & -\frac{i}{\sqrt{2}} & 0 & \frac{i}{\sqrt{2}} \cr
\frac{1}{\sqrt{2}} & 0 & 0 & 0 & 0 \cr
0 & -i & 0 & 0 & 0 \cr
\frac{1}{\sqrt{2}} & 0 & 0 & 0 & 0 \cr
0 & 0 & 0 & i & 0
\end{pmatrix},$$ which leads to the solution $g(s,p)=p-\frac{s^2}{2}$.
    
\end{example}

Note that in \cite[Theorem 3.5]{MR4900535} it is shown that if (NP${\mathbb G}$) has a solution, it in fact has a rational inner solution ($g(s,p)=p$ in the above example is inner). 

\subsection*{Acknowledgment} R.
Baran gratefully acknowledges support from the ID.UJ ``Initiative of Excellence - Research University".

\bigskip 
\noindent {\bf Data availability statement.} Data sharing is not applicable to this article as no
datasets were generated or analyzed during the current study.

\bigskip 
\noindent{\bf Conflicts of interest statement.} The authors have no conflicts of interest to declare that are relevant to the
content of this article.

\bibliographystyle{plain}
    \bibliography{refs}

@article{BHATTACHARYYA2012577,
title = {Dilations of {$\Gamma$}-contractions by solving operator equations},
journal = {Advances in Mathematics},
volume = {230},
number = {2},
pages = {577-606},
year = {2012},
issn = {0001-8708},
doi = {https://doi.org/10.1016/j.aim.2012.02.016},
url = {https://www.sciencedirect.com/science/article/pii/S0001870812000680},
author = {Tirthankar Bhattacharyya and Sourav Pal and Subrata {Shyam Roy}}}

@article{AglerYoung2004,
  author    = {Jim Agler and Nicholas J. Young},
  title     = {A model theory for {$\Gamma$}-contractions},
  journal   = {Journal of Operator Theory},
  volume    = {49},
  number    = {1},
  pages     = {45--60},
  year      = {2004}
}

@book {AMbook,
    AUTHOR = {Agler, Jim and McCarthy, John E.},
     TITLE = {Pick interpolation and {H}ilbert function spaces},
    SERIES = {Graduate Studies in Mathematics},
    VOLUME = {44},
 PUBLISHER = {American Mathematical Society, Providence, RI},
      YEAR = {2002},
     PAGES = {xx+308},
      ISBN = {0-8218-2898-3},
   MRCLASS = {47-02 (30D55 30E05 30H05 32A70 46E22 47A20 47A57)},
  MRNUMBER = {1882259},
MRREVIEWER = {D.\ Sarason},
       DOI = {10.1090/gsm/044},
       URL = {https://doi-org.ezproxy2.library.drexel.edu/10.1090/gsm/044},
}

@article {MR1665697,
    AUTHOR = {Agler, Jim and McCarthy, John E.},
     TITLE = {Nevanlinna-{P}ick interpolation on the bidisk},
   JOURNAL = {J. Reine Angew. Math.},
  FJOURNAL = {Journal f\"ur die Reine und Angewandte Mathematik. [Crelle's
              Journal]},
    VOLUME = {506},
      YEAR = {1999},
     PAGES = {191--204},
      ISSN = {0075-4102,1435-5345},
   MRCLASS = {47A57 (47A45)},
  MRNUMBER = {1665697},
MRREVIEWER = {D.\ Sarason},
       DOI = {10.1515/crll.1999.004},
       URL = {https://doi.org/10.1515/crll.1999.004},
}

@article {Kummert1989,
    AUTHOR = {Kummert, Anton},
     TITLE = {Synthesis of two-dimensional lossless {$m$}-ports with
              prescribed scattering matrix},
   JOURNAL = {Circuits Systems Signal Process.},
  FJOURNAL = {Circuits, Systems, and Signal Processing},
    VOLUME = {8},
      YEAR = {1989},
    NUMBER = {1},
     PAGES = {97--119},
      ISSN = {0278-081X,1531-5878},
   MRCLASS = {94C05 (15A23)},
  MRNUMBER = {998029},
MRREVIEWER = {Yun\ Qing\ Shi},
       DOI = {10.1007/BF01598747},
       URL = {https://doi.org/10.1007/BF01598747},
}

@article {AM,
    AUTHOR = {Agler, Jim and McCarthy, John E.},
     TITLE = {The three point {P}ick problem on the bidisk},
   JOURNAL = {New York J. Math.},
  FJOURNAL = {New York Journal of Mathematics},
    VOLUME = {6},
      YEAR = {2000},
     PAGES = {227--236},
      ISSN = {1076-9803},
   MRCLASS = {32A70 (46J15 47A57)},
  MRNUMBER = {1781508},
MRREVIEWER = {Scott\ McCullough},
       URL = {http://nyjm.albany.edu:8000/j/2000/6_227.html},
}

@incollection {MR1207393,
    AUTHOR = {Agler, Jim},
     TITLE = {On the representation of certain holomorphic functions defined
              on a polydisc},
 BOOKTITLE = {Topics in operator theory: {E}rnst {D}. {H}ellinger memorial
              volume},
    SERIES = {Oper. Theory Adv. Appl.},
    VOLUME = {48},
     PAGES = {47--66},
 PUBLISHER = {Birkh\"auser, Basel},
      YEAR = {1990},
      ISBN = {3-7643-2532-1},
   MRCLASS = {47A56 (47A20 47A60)},
  MRNUMBER = {1207393},
MRREVIEWER = {Joseph\ A.\ Ball},
}

@article {MR3441374,
    AUTHOR = {Grinshpan, Anatolii and Kaliuzhnyi-Verbovetskyi, Dmitry S. and
              Vinnikov, Victor and Woerdeman, Hugo J.},
     TITLE = {Stable and real-zero polynomials in two variables},
   JOURNAL = {Multidimens. Syst. Signal Process.},
  FJOURNAL = {Multidimensional Systems and Signal Processing. An
              International Journal},
    VOLUME = {27},
      YEAR = {2016},
    NUMBER = {1},
     PAGES = {1--26},
      ISSN = {0923-6082,1573-0824},
   MRCLASS = {30C10 (30C15)},
  MRNUMBER = {3441374},
       DOI = {10.1007/s11045-014-0286-3},
       URL = {https://doi.org/10.1007/s11045-014-0286-3},
}

@article {MR4359913,
    AUTHOR = {Knese, Greg},
     TITLE = {Kummert's approach to realization on the bidisk},
   JOURNAL = {Indiana Univ. Math. J.},
  FJOURNAL = {Indiana University Mathematics Journal},
    VOLUME = {70},
      YEAR = {2021},
    NUMBER = {6},
     PAGES = {2369--2403},
      ISSN = {0022-2518,1943-5258},
   MRCLASS = {47A48 (32A08 32H02 47A57)},
  MRNUMBER = {4359913},
MRREVIEWER = {Galina\ S.\ Borisova},
       DOI = {10.1512/iumj.2021.70.8738},
       URL = {https://doi.org/10.1512/iumj.2021.70.8738},
}

@article {MR2839446,
    AUTHOR = {Knese, Greg},
     TITLE = {Rational inner functions in the {S}chur-{A}gler class of the
              polydisk},
   JOURNAL = {Publ. Mat.},
  FJOURNAL = {Publicacions Matem\`atiques},
    VOLUME = {55},
      YEAR = {2011},
    NUMBER = {2},
     PAGES = {343--357},
      ISSN = {0214-1493,2014-4350},
   MRCLASS = {47A57 (30J05 32A17 42B05)},
  MRNUMBER = {2839446},
MRREVIEWER = {David\ William\ Pravica},
       DOI = {10.5565/PUBLMAT\_55211\_04},
       URL = {https://doi.org/10.5565/PUBLMAT_55211_04},
}

@article {MR4900535,
    AUTHOR = {Das, B. Krishna and Kumar, Poornendu and Sau, Haripada},
     TITLE = {Distinguished varieties and the {N}evanlinna-{P}ick problem on
              the symmetrized bidisk},
   JOURNAL = {Math. Z.},
  FJOURNAL = {Mathematische Zeitschrift},
    VOLUME = {310},
      YEAR = {2025},
    NUMBER = {3},
     PAGES = {Paper No. 40, 29},
      ISSN = {0025-5874,1432-1823},
   MRCLASS = {14M12 (32C25 32Q02 47A13 47A20 47A57)},
  MRNUMBER = {4900535},
       DOI = {10.1007/s00209-025-03740-8},
       URL = {https://doi.org/10.1007/s00209-025-03740-8},
}

@article {MR3724147,
    AUTHOR = {Bhattacharyya, Tirthankar and Sau, Haripada},
     TITLE = {Holomorphic functions on the symmetrized bidisk--{R}ealization,
              interpolation and extension},
   JOURNAL = {J. Funct. Anal.},
  FJOURNAL = {Journal of Functional Analysis},
    VOLUME = {274},
      YEAR = {2018},
    NUMBER = {2},
     PAGES = {504--524},
      ISSN = {0022-1236,1096-0783},
   MRCLASS = {32A10 (32A70 46E22 47A13 47A20 47A25)},
  MRNUMBER = {3724147},
MRREVIEWER = {\L ukasz\ Kosi\'nski},
       DOI = {10.1016/j.jfa.2017.09.013},
       URL = {https://doi.org/10.1016/j.jfa.2017.09.013},
}

@article {MR3641771,
    AUTHOR = {Agler, Jim and Young, N. J.},
     TITLE = {Realization of functions on the symmetrized bidisc},
   JOURNAL = {J. Math. Anal. Appl.},
  FJOURNAL = {Journal of Mathematical Analysis and Applications},
    VOLUME = {453},
      YEAR = {2017},
    NUMBER = {1},
     PAGES = {227--240},
      ISSN = {0022-247X,1096-0813},
   MRCLASS = {32A10 (30E05 32A07 32A26 32A30)},
  MRNUMBER = {3641771},
MRREVIEWER = {Muhammed\ Ali\ Alan},
       DOI = {10.1016/j.jmaa.2017.04.003},
       URL = {https://doi.org/10.1016/j.jmaa.2017.04.003},
}

\end{document}